\renewcommand{\L}{\mathcal{L}}
\renewcommand{\O}{\mathcal{O}}
\newcommand{\Z}{\mathbb{Z}}
\renewcommand{\dim}{\operatorname{dim}}
\newcommand{\Hom}{\operatorname{Hom}}
\newcommand{\Ext}{\operatorname{Ext}}
\newcommand{\RHom}{\operatorname{\textbf{R}Hom}}
\newcommand{\Coh}{\operatorname{Coh}}
\newcommand{\syz}{\operatorname{syz}}
\newcommand{\cosyz}{\operatorname{cosyz}}
\newcommand{\st}{\operatorname{st}}
\newcommand{\MCM}{\underline{\operatorname{MCM}}}
\newcommand{\cone}{\operatorname{cone}}
\newcommand{\CR}{\operatorname{CR}}
\newcommand{\depth}{\operatorname{depth}}
\newtheorem{Lem}{Lemma}[section]
\newtheorem{Cor}[Lem]{Corollary}
\newtheorem{Thm}[Lem]{Theorem}
\newtheorem*{Rem}{Remarks}
\def\gr{\operatorname{gr}\!}
\def\grproj{\operatorname{grproj}\!}
\title{Koszul duality between Betti and Cohomology numbers in Calabi-Yau case}
\author{Alexander Pavlov}
\address{University of Wisconsin, 480 Lincoln Dr., Madison, WI, USA}
\email{pavlov@math.wisc.edu}
\date{}
\subjclass[2010]{Primary: 
13D02  
Secondary:
14F05, 
14J32.  
}
\keywords{Betti numbers, Gorenstein ring, Koszul Duality, Maximal Cohen-Macaulay modules}
\begin{document}

\begin{abstract}
Let $X$ be a smooth projective Calabi-Yau variety and $L$ a Koszul line bundle on $X$. We show that for Betti numbers of a maximal Cohen-Macaulay module over the homogeneous coordinate ring $A$ of $X$ there are formulas similar to the formulas for cohomology number. This similarity is realized via the box-product resolution of the diagonal $\Delta_X \subset X \times X$.
\end{abstract}

\maketitle

\section{Introduction}

Let $X$ be a smooth projective variety and $L$ is a very ample line bundle and $A=\bigoplus_{m \geq 0} H^0(X, L^m)$ is the homogeneous coordinate ring. The affine cone over $X$ is an isolated singularity. With this singularity one can associate the triangulated category of singularity, in commutative algebra this category also known under the name of stable category of maximal Cohen-Macaulay modules.

For Calabi-Yau varieties it was shown by Orlov \cite{Orlov09} that the singularity category of $A$ is equivalent as to the bounded derived category $D^b(X)=D^b(\Coh (X))$ of coherent sheaves on $X$.

With a coherent sheaf $F$ on $X$ or more generally an object of $D^b(X)$ one can associate the following numerical invariants
$$
h^{ij}(F)=\dim \mathbb{H}^i(X, F(j)),
$$
we call this the cohomology numbers of $F$. With a finitely generated module $M$ over $A$ one can associate numerical invariants known as Betti numbers
$$
\beta_{ij}(M)=\dim \operatorname{Tor}_A^i(M,k(j)).
$$

The goal of this paper is to show that if the homogeneous coordinate ring $A$ is additionally Koszul then there are 
formulas for Betti numbers related to the definition of cohomology numbers in the same way as Koszul dual algebra $B=A^!$ is related to $A$.

Note that if $L$ is any ample line bundle then a sufficiently large power $L^d$ is a Koszul line bundle meaning that corresponding homogeneous coordinate ring is Koszul. 

A geometric manifestation of Koszulity of the homogeneous coordinate ring $A$ is the resolution of the structure sheaf of the diagonal $\Delta_X \subset X \times X$ that has the box-product structure in each term

\[
\begin{split}
&\dots \to \O(-m) \boxtimes R_m \to \O(-m+1) \boxtimes R_{m-1} \to \dots \\ 
&\to \O(-1) \boxtimes R_1 \to \mathcal{O}_X \boxtimes \mathcal{O}_X
\to \mathcal{O}_{\Delta_X} \to 0,
\end{split}
\]
where $R_m$ are sheaves on $X$ and the box-product is by definition $F \boxtimes G = \pi_1^*(F) \otimes_{X\times X} \pi_2^*(G)$, where $\pi_1$ and $\pi_2$ are projections of $X \times X$ to $X$. 

If we rewrite formula for cohomology numbers as follows
$$
h^{ij} (F)= \dim \Hom (\O(-j),F[i]),
$$
where $\Hom$ stands for $\Hom$-space in the derived category $D^b(X)$. Then (up to a shift) Betti numbers are given by the same formula, but $\O(-j)$ need to be replaced by its partner $R_{-j}$ in box-product resolution above. 

More precisely we have the following 
\begin{Thm}
Let $X$ be a smooth projective Calabi-Yau variety, and let $L$ be a Koszul line bundle s.t. $H^i(X, \O(j))=0$ for $i,j>0$. If $F \in D^b(X)$ and $M$ is the corresponding MCM module then
$$
\beta_{ij}(M)=\dim \Hom (R_{-j}, F[d-1+j-i]),
$$
for $i,j \in \Z$. 
\end{Thm}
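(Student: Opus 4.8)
We outline the strategy we would follow. The guiding analogy is already in the statement: $h^{ij}(F)=\dim\Hom(\O(-j),F[i])$ holds trivially, being a rewriting of $\dim\mathbb{H}^i(X,F(j))$, and we want the same computation with $\O(-j)$ replaced by its box-product partner $R_{-j}$ and with a Serre-type shift. So the plan is to realize the minimal free resolution of $M$ geometrically, out of the box-product resolution of $\Delta_X$, and to read off the Betti numbers; Orlov's equivalence \cite{Orlov09} is what lets us pass back and forth between $F$ and $M$, and the two hypotheses (Calabi--Yau, and $H^i(X,\O(j))=0$ for $i,j>0$) are what make the construction and the degree count work.

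First, the dictionary. By \cite{Orlov09} the Calabi--Yau condition gives an exact equivalence $D^b(X)\simeq D_{\mathrm{sg}}(A)\simeq\MCM(A)$ sending $F$ to the stable class of $M$; this also supplies the Serre functor $[\,d\,]$ on $D^b(X)$ that will produce the shift. The hypothesis $H^i(X,\O(j))=0$ for $i,j>0$, together with Serre duality, forces $A_m=H^0(X,\O(m))$ and makes $\O_X$ an arithmetically Cohen--Macaulay sheaf, so that $\Gamma_*\O_X(-m)=A(-m)$ and, crucially, taking $\Gamma_*$ of a complex of sums of $\O_X(-m)$'s is computed termwise.

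Next, the construction. Since the Fourier--Mukai transform with kernel $\O_{\Delta_X}$ is the identity of $D^b(X)$, we substitute the box-product resolution $\dots\to\O(-m)\boxtimes R_m\to\dots\to\O_X\boxtimes\O_X\to\O_{\Delta_X}\to 0$ into $F\simeq R\pi_{1*}(\pi_2^*F\otimes\O_{\Delta_X})$ and push forward: the term $\O(-m)\boxtimes R_m$ contributes $\O_X(-m)\otimes_k R\Gamma(X,F\otimes R_m)$, so that $F$ is the totalization of the resulting double complex. Applying $\Gamma_*$ and identifying the outcome, via Orlov, with $M$ in $D_{\mathrm{sg}}(A)$, we expect to obtain — after passing to a minimal representative — the (complete) free resolution of $M$ over the Gorenstein ring $A$, with the multiplicity of $A(-j)$ in homological degree $i$ equal, after tracking the box-product grading against the $R\Gamma$-grading and feeding this through the Serre functor, to $\dim\Hom_{D^b(X)}(R_{-j},F[\,d-1+j-i\,])$; here the negative-index $R_m$ are those of the doubly infinite, Gorenstein-symmetric extension of the box-product resolution (the one induced by $\omega_X\cong\O_X$), which is also what allows $i$ to range over all of $\Z$.

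The one genuinely delicate point — and where we would spend the most care — is this identification step. One must (i) make sure the complex produced by $\Gamma_*$ really computes the resolution of $M$ itself and not merely that of its saturation (a real pitfall, since $\Gamma_*$ a priori only sees the saturated module) — this is where the interplay of Orlov's functor with the vanishing hypothesis is used; (ii) check minimality: the horizontal differentials, inherited from the box-product differential, have matrix entries of positive internal degree, and the vertical differentials, coming from $R\Gamma$, can be killed over the field $k$, the homological-perturbation corrections introduced thereby again having positive internal degree, so that the minimalized complex has differential in $\mathfrak m$; and (iii) fix the normalizations of Orlov's functor and of the $R_m$ so that what appears is $R_{-j}$ with exponent $d-1+j-i$, rather than some other bundle with some other affine exponent. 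Granting these, the displayed formula follows, in complete parallel with the derivation of the cohomology-number formula.
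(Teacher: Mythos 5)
Your route is genuinely different from the paper's, and as written it has a gap that is not merely technical. The paper never constructs a resolution of $M$ geometrically: it combines the tautological formula of Theorem~\ref{main} with $\Phi^{-1}(k^{\st})\cong\O[1]$ and $\sigma\cong T_\O\circ\L$ to get $\beta_{ij}(M)=\dim\Hom(F,\sigma^{-j}(\O)[i+1])$, then uses the recursion $R_m[1]=(T_\O\circ\L)(R_{m-1})$ (so $\sigma^{-j}(\O)=R_{-j}[-j]$, which is also what \emph{defines} $R_m$ for $m<0$), and finishes with Serre duality. Your plan instead is the Beilinson-style one: feed the box-product resolution into the identity Fourier--Mukai transform, apply $\Gamma_*$, minimalize, and read off the Betti numbers.

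The central step of that plan fails in the Calabi--Yau case. Because the Gorenstein parameter is $a=0$, we have $H^d(X,\O)=k$ and more generally $H^d(X,\O(n-m))=A_{m-n}^*$ for $n\le m$; consequently $\mathbf{R}\Gamma_{\geq l}(\O(-m))$ is \emph{not} a free module (it carries a torsion tail in degrees $\le m$), and if you instead take only $H^0$ termwise you lose the quasi-isomorphism with $\mathbf{R}\Gamma_{\geq l}(F)$ by exactly that amount. Put categorically: $\Phi(\O(-m))\neq 0$ in $D^{\gr}_{Sg}(A)$ for every $m$ (it cannot vanish, since $\Phi$ is an equivalence), so the terms of your pushed-forward complex do not become free modules and the totalization is not a free resolution of $M$. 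This is precisely where the Calabi--Yau case differs from $\P^n$, and correcting each term by its stabilization --- and tracking what this does to the differentials --- is the actual content of the paper's Lemmas on $\Phi^{-1}(k^{\st})$ and on $\sigma\cong T_\O\circ\L$, i.e.\ the part you deferred to the ``delicate identification step.'' A concrete symptom: for $j=0$ your construction outputs the multiplicity $\dim H^{-i}(X,F)$, while the theorem asserts $\dim H^{d-1-i}(X,F)$; the discrepancy $[d-1]$ comes from Serre duality together with the shift $\Phi^{-1}(k^{\st})\cong\O[1]$ and cannot be absorbed by renormalizing the $R_m$. Two further unproved ingredients: passing from the positively indexed multiplicities $\dim H^{j-i}(X,F\otimes R_j)$ that your double complex produces to $\dim\Hom(R_{-j},F[d-1+j-i])$ requires a duality of the form $R_{-j}\simeq R_j^\vee[\ast]$, equivalently the invertibility of the recursion via the spherical twist $T_\O$, which you only gesture at through an unconstructed ``doubly infinite Gorenstein-symmetric'' resolution; and the range $i<0$ (complete resolutions) is not reachable from the one-sided resolution of the diagonal without that extension. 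The paper's formal argument via $\sigma=\gamma[1]$ handles all of this at once.
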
 

The papers is organized as follows. We give brief preliminaries on maximal Cohen-Macaulay modules and singularity category, details can be found in \cite{Buch87} and \cite{Orlov09}. Then we summarize application of Orlov's equivalence to Betti numbers as it was developed in \cite{P}. For the resolution of the diagonal we closely follow \cite{Kawamata2002}.

\section{Maximal Cohen-Macaulay modules and Singularity Category}

Let $A = \bigoplus_{m \geq 0}  A_m$ be a graded commutative algebra over an algebraically closed field $k$ of characteristic zero. In this paper $A$ is connected that is $A_0=k$ and Gorenstein
$$
\RHom_A(k,A) \cong k(a)[-n],
$$
where the parameter $a \in \Z$ is called the Gorenstein parameter of $A$.

If $X$ is a smooth Calabi-Yau projective variety and $L$ is a very ample line bundle on $X$ then the homogeneous coordinate ring 
$$
A= \bigoplus_{m \geq 0} A_m = \bigoplus_{m \geq 0} H^0(X, L^m)
$$
is generated in degree on, connected, Gorenstein with parameter $a=0$. 

The abelian category of coherent sheaves $\operatorname{Coh}(X)$ is equivalent, by Serre's theorem, to the quotient category $\operatorname{qgr} (R)$
$$
\operatorname{Coh}(X) \cong \operatorname{qgr} (A),
$$
here the abelian category $\operatorname{qgr} (A)$ is defined as a quotient of the abelian category of finitely generated graded $A$-modules by the Serre subcategory of torsion modules
$$
\operatorname{qgr} (A) = \operatorname{mod}_{\gr} (A) / \operatorname{tors}_{\gr} (A),
$$
where $\operatorname{tors}_{\gr} (A)$ is the category of graded modules that are finite dimensional over $k$. 

The bounded derived category of the abelian category of finitely generated graded $A$-modules $D^b(\gr-A) = D^b(\operatorname{mod}_{\gr} (A))$ has a full triangulated subcategory consisting of objects that are isomorphic to bounded complexes of projectives. The latter subcategory can also be described as the derived category of the exact category of graded projective modules, we denote it $D^b(\grproj-R)$. The triangulated {\it singularity category} of $R$ is defined as the Verdier quotient
$$
D^{\gr}_{Sg}(A) = D^b(\gr-A) / D^b(\grproj-A).
$$

In commutative algebra the singularity category is known under the name {\it stable category of maximal Cohen-Macaulay modules}. It was introduced and studied by R.-O. Buchweitz \cite{Buch87}. 

If $M$ is a finitely generated module over $A$, then depth of $M$ is bounded by the dimension of $M$
$$
\depth(M) \leq \dim(M) \leq \dim(A).
$$
If $\depth(M)=\dim(M)$, then the module $M$ is called Cohen-Macaulay, and if $\depth(M)=\dim(A)$ the module $M$ is called a maximal Cohen-Macaulay (MCM). In the stable category of MCM modules objects are MCM modules and stable homomorphism groups are defined by
$$
\underline{\Hom}(M,N)=\Hom(M,N)/\{\text{morphisms that factor through a projective\}}.
$$
In particular, all projective modules (and projective summands) in the stable category are identified with the zero module. We denote this category $\MCM_{\gr}(A)$. Note that the operation of taking syzygies is functorial on the stable category, we denote this functor $\syz$\,. The functor $\syz$ is an autoequivalence of $\MCM_{\gr}(A)$, the inverse of this functor we denote $\cosyz$\,. Moreover, if $A$ is an isolated singularity the stable category of MCM modules is $\Hom$-finite and admits a structure of triangulated category with autoequivalence $\cosyz$.

One of the main results of \cite{Buch87} is the following 
\begin{Thm} 
There is an equivalence of triangulated categories
$$
\MCM_{\gr}(A) \cong D^{\gr}_{Sg}(A)\,.
$$
\end{Thm}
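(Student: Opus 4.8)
The plan is to realise the equivalence, following Buchweitz, as the functor $\Phi$ induced by sending a maximal Cohen--Macaulay module $M$ to the complex with $M$ placed in degree zero, and then to verify in turn that $\Phi$ is triangulated, essentially surjective, and fully faithful. First I would record that $\MCM_{\gr}(A)$, with the exact structure inherited from $\operatorname{mod}_{\gr}(A)$, is a Frobenius exact category whose projective--injective objects are exactly the graded projective $A$-modules. This is precisely where the Gorenstein hypothesis enters: $\RHom_A(k,A)\cong k(a)[-n]$ gives $A$ finite injective dimension, hence $\Ext^{>0}_A(N,A)=0$ for every MCM module $N$, which makes graded projectives injective in $\MCM_{\gr}(A)$. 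So the stable category carries the triangulated structure recalled above, with suspension $\cosyz$. The composite $\operatorname{mod}_{\gr}(A)\hookrightarrow D^b(\gr-A)\twoheadrightarrow D^{\gr}_{Sg}(A)$ annihilates every perfect complex, in particular every projective module, so it kills module maps factoring through a projective and descends to an additive functor $\Phi\colon\MCM_{\gr}(A)\to D^{\gr}_{Sg}(A)$. To see $\Phi$ is triangulated, pick for $M$ MCM a short exact sequence $0\to M\to P\to\cosyz M\to 0$ with $P$ projective; its image in $D^{\gr}_{Sg}(A)$ gives a natural isomorphism $\Phi(\cosyz M)\cong\Phi(M)[1]$, and since every distinguished triangle of $\MCM_{\gr}(A)$ arises from an admissible short exact sequence --- which maps to a distinguished triangle of $D^b(\gr-A)$, hence of $D^{\gr}_{Sg}(A)$ --- $\Phi$ preserves triangles.

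For essential surjectivity, take $X\in D^b(\gr-A)$ and a resolution $P_\bullet\to X$ by finitely generated graded projectives; it has bounded cohomology, so for $n\gg 0$ the brutal truncation $\sigma_{\ge -n}P_\bullet$ is a bounded complex of projectives, hence zero in $D^{\gr}_{Sg}(A)$. The triangle $\sigma_{<-n}P_\bullet\to P_\bullet\to\sigma_{\ge -n}P_\bullet\to$ then identifies $X$ in $D^{\gr}_{Sg}(A)$ with a shift of the syzygy module $\Omega$ appearing as the cokernel at the truncation spot. Since $A$ is Gorenstein of dimension $\dim A$, any syzygy of order at least $\dim A$ is MCM (the canonical module being free up to a twist), so $\Omega$ is MCM for $n$ large; as $\syz^{\pm1}$ of an MCM module is again MCM, $X\cong\Phi(\cosyz^{\,j}\Omega)$ for a suitable $j$.

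The remaining, and genuinely hard, step is full faithfulness: for $M,N$ MCM one must show $\underline{\Hom}_A(M,N)\xrightarrow{\ \sim\ }\Hom_{D^{\gr}_{Sg}(A)}(M,N)$. The left-hand side is computed by degree-zero chain maps modulo homotopy between complete (that is, totally acyclic) resolutions $P_\bullet$ of $M$ and $Q_\bullet$ of $N$; the existence of such resolutions again rests on Gorenstein-ness, since that is what keeps $\Hom_A(P_\bullet,A)$ acyclic. An element of the right-hand side is a roof $M\xleftarrow{s}Z\xrightarrow{f}N$ with $\cone(s)$ perfect, and the content of the proof is to straighten every such roof to an honest module homomorphism well defined modulo projectives. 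Concretely, one proves that for MCM modules the connecting maps $\Ext^i_A(M,N)\to\Ext^{i+1}_A(M,\cosyz N)$ are isomorphisms and that $\Hom_{D^b}(C,M[j])=0$ for perfect $C$ once $j$ leaves a bounded window governed by the amplitude of $C$ and Gorenstein duality; these two facts let one absorb the denominator $s$ into a shift and then descend the numerator. Equivalently, one may appeal to the general localisation principle for Frobenius subcategories (Keller--Vossieck, Rickard, Krause), for which the hypotheses to check are the Auslander--Buchweitz approximation $0\to M'\to M_0\to M\to 0$ with $M_0$ MCM and $M'$ perfect, together with the same $\Ext$-stabilisation. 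I expect this roof-straightening to be the main obstacle: it is exactly here that finite injective dimension of $A$ is indispensable, being what makes MCM modules cohomologically finite in both directions and thereby forces the calculus of fractions to terminate.
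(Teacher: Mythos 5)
The paper does not actually prove this statement: it is quoted from \cite{Buch87} without argument, and your outline is essentially a reconstruction of Buchweitz's own proof, so the two ``approaches'' coincide. The parts you have written out in full are correct: the Frobenius structure on $\MCM_{\gr}(A)$ (with $\Ext^{>0}_A(N,A)=0$ for MCM $N$, a consequence of the Gorenstein hypothesis, making graded projectives injective), the descent of the composite $\operatorname{mod}_{\gr}(A)\to D^{\gr}_{Sg}(A)$ to the stable category, the compatibility of $\cosyz$ with $[1]$ via $0\to M\to P\to \cosyz M\to 0$, and essential surjectivity by brutal truncation of a projective resolution together with the depth lemma.

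Full faithfulness, however, remains a plan rather than a proof, as you yourself acknowledge, and one of the ingredients you name is pointed in the wrong direction. The vanishing $\Hom_{D^b}(C,M[j])=0$ for perfect $C$ and $|j|\gg 0$ holds for trivial boundedness reasons and uses no Gorenstein input; what the roof-straightening actually requires is control of $\Hom_{D^b(\gr-A)}(M,C)$ for $M$ MCM and $C$ a bounded complex of projectives — maps \emph{out of} the MCM module \emph{into} the perfect complex — since a roof $M\xleftarrow{s}Z\to N$ yields a triangle $Z\to M\to \cone(s)$ and it is the arrow $M\to\cone(s)$ that must be tamed. The lemma carrying the real content of Buchweitz's proof is: for $M$ MCM, every morphism $M\to C$ in $D^b(\gr-A)$ with $C$ perfect factors through a projective module. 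D\'evissage over the stupid filtration of $C$ reduces this to $\Ext^{i}(M,P)=0$ for $i>0$ and $P$ graded projective (Gorenstein again) together with the trivial vanishing of negative $\Ext$'s, leaving only the degree-zero contribution $\Hom(M,C^0)$ — and maps factoring through the projective $C^0$ are exactly what $\underline{\Hom}$ kills. With this lemma, both surjectivity and injectivity of $\underline{\Hom}(M,N)\to\Hom_{D^{\gr}_{Sg}(A)}(M,N)$ follow by straightening roofs after replacing $M$ by a sufficiently high syzygy. If your proposal is meant to stand alone, this lemma must be proved or cited from \cite{Buch87}; as written it is the one genuine gap.
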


The triangulated singularity category of $A$ is related to the bounded derived category of $X$, as was shown in \cite{Orlov09}. We briefly summarize the result in one special case used in this paper.

The truncated derived global sections functor $\textbf{R} \Gamma_{\geq l}: D^b(X) \to D^b(\gr-A_{\geq l})$ is given by the following direct sum:
$$
\textbf{R} \Gamma_{\geq l}(F)=\bigoplus_{i \geq l} \RHom(\mathcal{O}_X, F(i)),
$$
where $D^b(\gr-A_{\geq l})$ denotes the derived category of the category of finitely generated modules concentrated in degrees $l$ and higher.

The functor $\st : D^b(\gr-A_{\geq i}) \to D^{\gr}_{Sg}(A)$ is the stabilization functor constructed by Orlov.

The special case of Orlov's theorem (see Theorem 2.5 in \cite{Orlov09}) that we need in this paper can be formulated as follows.
\begin{Thm}
Let $X$ be a smooth projective Calabi-Yau variety, $A$ its homogeneous coordinate ring. Then for any parameter $l \in \Z$ the composition of functors
$$
\Phi_l = \st \circ \textbf{R} \Gamma_{\geq l} : D^b(X) \to D^{\gr}_{Sg}(A)
$$
is an equivalence of triangulated categories.
\end{Thm}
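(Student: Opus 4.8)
The plan is to translate graded Betti numbers into morphism spaces of the singularity category, carry them to $D^b(X)$ through Orlov's equivalence, and then use the box-product resolution of the diagonal to recognise the resulting test object as the sheaf $R_{-j}$. First I would record the reformulation of Betti numbers established in \cite{P}: since the minimal free resolution counts the generators of the successive syzygies, and since $\syz$ is the inverse shift of the triangulated structure on $\MCM_{\gr}(A)$, one has
$$
\beta_{ij}(M)=\dim\underline{\Hom}(\syz^i M, k(-j))=\dim\Hom_{D^{\gr}_{Sg}(A)}(M,k(-j)[i]),
$$
valid for $i\geq 1$ and extended to all $i\in\Z$ by passing to the complete resolution. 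Applying the equivalence $\Phi_l$ and writing $M=\Phi_l(F)$ moves this computation into $D^b(X)$, where the residue field is replaced by the object $\Phi_l^{-1}(k(-j))$.

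At this point the Calabi-Yau hypothesis enters twice. On one hand the Serre functor of $D^b(X)$ is $[d]$, and since $\Phi_l$ is a triangulated equivalence this is also the Serre functor of $D^{\gr}_{Sg}(A)$ (matching the intrinsic description for a Gorenstein ring of dimension $d+1$ with parameter $a=0$); Serre duality therefore exchanges the two slots of the Hom-space at the cost of a shift by $d$. On the other hand the box-product resolution of $\O_{\Delta_X}$ provides the geometric identification I actually need. Convolving $F$ against this resolution — applying $\mathbf R\pi_{1*}(\pi_2^*F\otimes-)$ termwise and using the projection formula — exhibits $F$ as the total complex of the terms $\O(-m)\otimes\mathbf R\Gamma(X,F\otimes R_m)$. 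Feeding this through $\mathbf R\Gamma_{\geq l}$ turns each $\O(-m)$ into the free module $A(-m)$, and the hypothesis $H^i(X,\O(j))=0$ for $i,j>0$ is exactly what keeps $\mathbf R\Gamma(\O(j))$ concentrated in a single cohomological degree, so the result is an honest complex of free modules computing $\mathbf R\Gamma_{\geq l}(F)$ whose $m$-th generating space is $\mathbf R\Gamma(X,F\otimes R_m)$. This is the precise sense in which the sheaves $R_m$ play the role of the syzygies of the residue field, the Koszul-dual partners of the twists $\O(-m)$, following \cite{Kawamata2002}.

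Reading the multiplicities off this resolution, the generators of internal degree $j$ in homological degree $i$ are counted by a cohomology group of $F\otimes R_j$, which — writing $R_{-j}$ for the partner appearing opposite $\O(-j)$ in the resolution of the diagonal and applying Serre duality on the Calabi-Yau $d$-fold — becomes $\dim\Hom_{D^b(X)}(R_{-j},F[d-1+j-i])$. I expect the genuine difficulty to be the accounting of shifts. Two points must be reconciled: that the complex produced from the diagonal is minimal, so that its ranks are the true Betti numbers and not a non-minimal overcount (this should follow from Koszulity together with the vanishing hypothesis, which forces all differentials into the maximal ideal); and that the passage from the auxiliary object $\mathbf R\Gamma_{\geq l}(F)$ to Orlov's MCM module $M=\Phi_l(F)$ contributes precisely the shift $d-1$ and nothing more. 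The latter is where the stabilization functor $\st$, the identification $\syz=[-1]$, and the Serre shift $[d]$ must be made to combine into the single exponent $d-1+j-i$; verifying that these contributions add up correctly is the main obstacle.
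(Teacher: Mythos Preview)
Your proposal does not address the stated theorem. The statement in question is Orlov's equivalence: that $\Phi_l=\st\circ\mathbf R\Gamma_{\geq l}$ is an equivalence $D^b(X)\to D^{\gr}_{Sg}(A)$ when $X$ is Calabi--Yau. The paper does not prove this; it simply records it as the special case $a=0$ of Theorem~2.5 in \cite{Orlov09} and then uses it as a black box. Your write-up, by contrast, takes $\Phi_l$ as an already-known equivalence and proceeds to sketch a proof of the paper's \emph{main} theorem, the formula $\beta_{ij}(M)=\dim\Hom(R_{-j},F[d-1+j-i])$. So nothing in your proposal argues for full faithfulness or essential surjectivity of $\Phi_l$; you have written a proof for a different statement.

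As a secondary remark: even viewed as an argument for the Betti-number formula, your route diverges from the paper's. The paper never convolves $F$ against the diagonal resolution to manufacture a free resolution of $\mathbf R\Gamma_{\geq l}(F)$. Instead it works purely with the test object: it identifies the internal-shift functor as $\sigma\cong T_{\O}\circ\L$, proves the recursion $R_m[1]\cong(T_{\O}\circ\L)(R_{m-1})$ from the short exact sequence $0\to R_m\to B_m\otimes\O\to R_{m-1}(1)\to 0$, and concludes $\sigma^{-j}(\O)=R_{-j}[-j]$. The final shift then falls out of Serre duality in one line. Your approach would require checking minimality of the convolved complex and tracking shifts through the stabilization functor, which you correctly flag as the hard part; the paper sidesteps both issues by moving the entire computation to the level of the test object $\Phi^{-1}(k^{\st})\cong\O[1]$.
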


Set $\Phi=\Phi_1$.

Abusing notation, we denote the composition of Orlov's and Buchweitz's equivalences by the same $\Phi$.
$$
\Phi : D^b(X) \to \MCM_{\gr}(A).
$$
This equivalence of categories is our main tool for computing Betti numbers of MCM modules.

\section{The tautological formula for Betti numbers}

Let $M$ be a finitely generated graded $A$-module, and let $P^\bullet \to M$ be a minimal free resolution of $M$ over $R$
$$
\dots \rightarrow P^1 \rightarrow P^0 \rightarrow M \rightarrow 0,
$$
where each term $P^i$ of the resolution is a direct sum of free modules with generators in various degrees
$$
P^i \cong \bigoplus_j A(-j)^{\beta_{i,j}}\,.
$$
The exponents $\beta_{i,j}$ are positive integers, that are called the (graded) Betti numbers. The Betti numbers contain information about the shape of a minimal free resolution.

We start with the simple observation that the {\it graded Betti numbers} are given by the formula
$$
\beta_{i,j}(M)=\dim \Ext^i(M, k(-j))\,.
$$
The extension groups $\Ext^i(M, k(-j))$ are isomorphic to the cohomology groups of the complex $\Hom(P^\bullet, k(-j))$, but the resolution $P^\bullet$ is minimal, therefore, differentials in the complex $\Hom(P^\bullet, k(-j))$ vanish.

By the depth lemma $k$-syzygy of a finitely generated module are MCM modules for $k \geq k_0>0$, so up to finitely many terms problem of computing Betti numbers is equivalent to a problem of computing Betti numbers of MCM modules. In this paper we deal with the MCM case. 

In Orlov's equivalence the stable category of MCM modules is used, so our next step is to express the Betti numbers in terms of dimensions of stable extension groups.
\begin{Lem}
Let $M$ be a finitely generated MCM module. Then the graded Betti numbers are equal to the dimensions of the following stable extension groups
$$
\beta_{i,j}(M)=\dim\underline{\Ext}^i(M, k^{st}(-j)),
$$
for $i \geq 0$\,.
\end{Lem}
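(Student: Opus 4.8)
The starting point is the tautological formula $\beta_{i,j}(M)=\dim\Ext^i_A(M,k(-j))$ just established, so what must be shown is that ordinary and stable $\Ext$ give the same dimensions in this situation. The plan is to push the computation through the Buchweitz equivalence $\MCM_{\gr}(A)\cong D^{\gr}_{Sg}(A)=D^b(\gr-A)/D^b(\grproj-A)$: the localization functor sends $M$ to $M$, sends $k(-j)$ to its stabilization $k^{st}(-j)$, and carries the shift $[1]$ to $\cosyz$, so it takes $\Ext^i_A(M,k(-j))=\Hom_{D^b(\gr-A)}(M,k(-j)[i])$ to $\underline{\Ext}^i(M,k^{st}(-j))=\Hom_{D^{\gr}_{Sg}(A)}(M,k(-j)[i])$, which by the theory of \cite{Buch87} is the stable (Tate) cohomology $\widehat{\Ext}^i_A(M,k(-j))$. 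Thus the lemma becomes the assertion that the natural comparison $\Ext^i_A(M,k(-j))\to\widehat{\Ext}^i_A(M,k(-j))$ is an isomorphism for all $i\ge 0$.

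For $i\ge 1$ this holds with any finitely generated module in place of $k(-j)$, and is exactly where maximal Cohen--Macaulayness enters. Since $M$ is MCM over the Gorenstein ring $A$ one has $\Ext^i_A(M,A)=0$ for $i>0$, so the minimal graded free resolution of $M$ splices with the $A$-dual of a free resolution of $\Hom_A(M,A)$ into a complete resolution of $M$ whose non-negative truncation is, on the nose, the minimal resolution of $M$. Applying $\Hom_A(-,N)$ and comparing the complete resolution with that truncation yields a short exact sequence of complexes whose long exact cohomology sequence gives $\Ext^i_A(M,N)\cong\widehat{\Ext}^i_A(M,N)$ for $i\ge 1$, together with the canonical surjection $\Hom_A(M,N)\twoheadrightarrow\widehat{\Ext}^0_A(M,N)=\underline{\Hom}_A(M,N)$ whose kernel is the subspace of homomorphisms factoring through a free module; all of this is in \cite{Buch87}. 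Taking $N=k(-j)$ disposes of every $i\ge 1$.

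It remains to treat $i=0$, i.e.\ to show that $\Hom_A(M,k(-j))\twoheadrightarrow\underline{\Hom}_A(M,k(-j))$ is an isomorphism, equivalently that no nonzero graded homomorphism $M\to k(-j)$ factors through a free module. Since $k(-j)$ is annihilated by $A_{\ge 1}$, a factorization $M\to Q\to k(-j)$ through a free module $Q$ vanishes as soon as the image of $M\to Q$ lies in $A_{\ge 1}Q$, so it suffices to know that every degree-zero graded homomorphism $M\to A(-a)$ has image in $A_{\ge 1}(-a)$. If it did not, it would send some element of $M_a$ to a nonzero scalar in $A_0$, hence, being graded, would be surjective onto $A(-a)$, hence would split, exhibiting $A$ as a direct summand of $M$. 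We exclude this by fixing, as we may, the representative with no free direct summand (harmless, since in Orlov's correspondence the MCM module attached to $F$ is determined only up to free summands). With this normalization the $i=0$ case follows, and combining the three steps gives $\beta_{i,j}(M)=\dim\underline{\Ext}^i(M,k^{st}(-j))$ for all $i\ge 0$.

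The one genuine subtlety, and the step I expect to be the main obstacle, is precisely this $i=0$ case: in positive cohomological degrees the identification is soft and formal from Buchweitz's comparison for MCM modules, but in degree $0$ it really does depend on the no-free-summand normalization — without it the stated equality fails already for $M=A$, where $\beta_{0,0}(A)=1$ while the image of $A$ in the singularity category is zero. Everything else is routine bookkeeping with syzygies and complete resolutions.
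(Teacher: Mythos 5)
Your argument is correct and complete; the paper itself gives no proof here, deferring entirely to \cite{P}, so there is nothing internal to compare it against. Your route --- identifying $\underline{\Ext}^i(M,k^{st}(-j))$ with $\Hom_{D^{\gr}_{Sg}(A)}(M,k(-j)[i])$, hence with Tate cohomology $\widehat{\Ext}^i_A(M,k(-j))$, and then invoking Buchweitz's comparison between a minimal free resolution and the complete resolution obtained by splicing (using $\Ext^i_A(M,A)=0$ for $i>0$ since $M$ is MCM over a Gorenstein ring) --- is exactly the standard mechanism, and the short exact sequence of $\Hom$-complexes does give an isomorphism in degrees $i\geq 1$ and a surjection $\Hom_A(M,k(-j))\twoheadrightarrow\underline{\Hom}_A(M,k(-j))$ in degree $0$ whose kernel consists of maps factoring through a free module.

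Your treatment of the $i=0$ case is the genuinely valuable part: the lemma as stated is false for modules with free direct summands (already for $M=A$, where $\beta_{0,0}=1$ but $A$ vanishes in the stable category), and your reduction --- a degree-zero map $M\to Q\to k(-j)$ through a free $Q$ is nonzero only if some component $M\to A(-a)$ hits a unit, in which case it is surjective onto a free module, splits, and exhibits a free summand of $M$ --- is the right way to see that the no-free-summand normalization is both necessary and sufficient. This hypothesis is left implicit in the paper (it is harmless because objects of the stable category determine their MCM representatives only up to free summands, so one may always choose the minimal one), but making it explicit is an improvement rather than a gap.
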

\begin{proof}
See \cite{P} for the proof.
\end{proof}

\begin{Rem}
The formula above also can be used to compute graded Betti numbers of an MCM module $M$ for $i < 0$. This means that we replace the projective resolution of $M$ by a complete minimal resolution
$$
\CR(M)^\bullet = \ldots \leftarrow P_{-2} \leftarrow P_{-1} \leftarrow P_0 \leftarrow P_1 \leftarrow P_2 \leftarrow \ldots,
$$
and we use such a complete resolution to extend the definition of $\Ext^i(M, k(-j))$ for $i<0$. See \cite{Buch87} for details.
\end{Rem}

Let us denote $\sigma$ the endofunctor $\sigma: D^b(X) \to D^b(X)$ corresponding to the shift in internal degree functor on the MCM side
$$
\sigma=\Phi^{-1} \circ (1) \circ \Phi.
$$

We have the following tautological formula for Betti numbers.

\begin{Thm}\label{main}
The graded Betti numbers of an MCM module $M$ are given by
$$
\beta_{i,j}(M)=\dim \Hom_{D^b(X)}(\Phi^{-1}(M), \sigma^{-j}(\Phi^{-1}(k^{st}))[i])\,.
$$
\end{Thm}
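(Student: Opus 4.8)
The plan is to chain together the three results already established: the tautological description of Betti numbers via $\mathrm{Ext}$-groups over $A$ (the observation $\beta_{i,j}(M) = \dim\mathrm{Ext}^i(M,k(-j))$), the translation into stable $\mathrm{Ext}$-groups over the stable category (the Lemma), and Orlov--Buchweitz equivalence $\Phi : D^b(X)\to\MCM_{\gr}(A)$. Concretely, I would start from
$$
\beta_{i,j}(M)=\dim\underline{\mathrm{Ext}}^i(M, k^{st}(-j)),
$$
and then observe that in the triangulated category $\MCM_{\gr}(A)$ one has $\underline{\mathrm{Ext}}^i(M,N) = \Hom_{\MCM_{\gr}(A)}(M, N[i])$, where $[1]$ is the shift in the triangulated structure (which is $\cosyz$). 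The internal-degree twist $(-j)$ is the autoequivalence $(1)^{-j} = (-j)$, which under $\Phi$ corresponds to $\sigma^{-j}$ by definition of $\sigma$. So
$$
\beta_{i,j}(M) = \dim\Hom_{\MCM_{\gr}(A)}\bigl(M, (\sigma\text{-twisted }k^{st})[i]\bigr).
$$

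The next step is to transport this computation across $\Phi$. Since $\Phi$ is an equivalence of triangulated categories, it is fully faithful and commutes with shifts, so $\Hom_{\MCM_{\gr}(A)}(A',B') \cong \Hom_{D^b(X)}(\Phi^{-1}A', \Phi^{-1}B')$ for any objects $A', B'$. Applying this with $A' = M$, so $\Phi^{-1}A' = \Phi^{-1}(M)$, and with $B'$ the object $(-j)$-twist of $k^{st}$ shifted by $[i]$, I get
$$
\beta_{i,j}(M) = \dim\Hom_{D^b(X)}\bigl(\Phi^{-1}(M), \Phi^{-1}(k^{st}(-j))[i]\bigr).
$$
It then remains to identify $\Phi^{-1}(k^{st}(-j))$ with $\sigma^{-j}(\Phi^{-1}(k^{st}))$. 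This is exactly the defining property of $\sigma = \Phi^{-1}\circ(1)\circ\Phi$: applying $\Phi^{-1}$ to the $j$-fold internal twist of any MCM object $N$ gives $\sigma^{-j}$ applied to $\Phi^{-1}(N)$, since $\Phi^{-1}\circ(j)\circ\Phi = (\Phi^{-1}\circ(1)\circ\Phi)^{\circ j} = \sigma^j$ and the inverse twist $(-j)$ corresponds to $\sigma^{-j}$. Taking $N = k^{st}$ closes the argument.

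The one point requiring a little care — and the main potential obstacle — is the bookkeeping of which shift functor is which. On the $\MCM$ side the shift in the triangulated structure is $\cosyz$ (equivalently the inverse of the syzygy functor), and one must check that the identification $\underline{\mathrm{Ext}}^i(M,N) = \Hom_{\MCM}(M,N[i])$ of the Lemma and the triangulated structure are compatible for all $i \in \Z$ (including negative $i$, via the complete resolution described in the Remark), so that the formula holds for all $i,j\in\Z$ and not just in a range. Once this compatibility is in place — it is built into Buchweitz's equivalence $\MCM_{\gr}(A)\cong D^{\gr}_{Sg}(A)$ and the extension of $\mathrm{Ext}$ via complete resolutions — the rest is the formal manipulation of an equivalence of triangulated categories described above, with no further input needed.
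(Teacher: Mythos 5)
Your argument is correct and is exactly the route the paper intends: the theorem is presented as ``tautological'' precisely because it follows by combining the Lemma $\beta_{i,j}(M)=\dim\underline{\Ext}^i(M,k^{st}(-j))$ with the full faithfulness of the triangulated equivalence $\Phi$ and the definition $\sigma=\Phi^{-1}\circ(1)\circ\Phi$, which is what you do. Your added remark about checking compatibility of the shift $\cosyz$ with $\underline{\Ext}^i$ for all $i\in\Z$ via complete resolutions is a point the paper leaves implicit but is indeed the only place where any care is needed.
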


The advantage of the formula is that answers are presented in the derived category, which is easier to understand than stable category of MCM modules.

We can give more detailed description of $\sigma$ and $\Phi^{-1}(k^{\st})$. The following lemma was proved in \cite{P} for elliptic curves but proof for Calabi-Yau varieties goes verbatim.

\begin{Lem}
Let $X$ be a smooth projective Calabi-Yau variety then $\Phi^{-1}(k^{\st}) \cong \O[1]$.
\end{Lem}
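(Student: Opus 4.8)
The plan is to identify $\Phi^{-1}(k^{\st})$ by unwinding the definition $\Phi = \st \circ \mathbf{R}\Gamma_{\geq 1}$ and asking which object $F \in D^b(X)$ has truncated global sections whose stabilization is the residue field. Equivalently, I would try to produce a graded $A$-module $N$ with $\st(N) \cong k^{\st}$ and such that $N$ arises as $\mathbf{R}\Gamma_{\geq 1}(F)$ for a geometric $F$; the natural guess, given the Gorenstein parameter $a = 0$ and the Calabi-Yau hypothesis, is that $F = \O_X$ up to shift. So the first concrete step is to compute $\mathbf{R}\Gamma_{\geq 1}(\O_X) = \bigoplus_{i \geq 1} \RHom(\O_X, \O_X(i))$ and compare it, inside the singularity category, with the stabilization of $k$.

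The key computation is that under the hypothesis $H^i(X,\O(j)) = 0$ for $i,j > 0$ (which is in force via the standing assumptions on $L$), we have $\mathbf{R}\Gamma_{\geq 1}(\O_X) \cong \bigoplus_{i \geq 1} H^0(X,\O(i)) = A_{\geq 1} = \mathfrak{m}$, the graded maximal ideal of $A$, concentrated in cohomological degree $0$. Now there is a short exact sequence $0 \to \mathfrak{m} \to A \to k \to 0$ of graded $A$-modules, which exhibits $\mathfrak{m}$ as the first syzygy of $k$; since $A$ is a free (hence stably zero) module, in the singularity category this gives $\st(\mathfrak{m}) \cong \st(k)[-1]$, i.e. $\st(\mathfrak{m}) \cong \syz(k^{\st}) = k^{\st}[-1]$. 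Therefore $\Phi(\O_X) \cong k^{\st}[-1]$, which upon applying $\Phi^{-1}$ and shifting yields $\Phi^{-1}(k^{\st}) \cong \O_X[1]$, as claimed. I should double-check the direction of the shift against Orlov's conventions for $\st$ and the identification of $[1]$ in $\MCM_{\gr}(A)$ with $\cosyz$, but the Gorenstein parameter $a=0$ is exactly what makes the bookkeeping come out to a clean single shift with no internal twist.

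The main obstacle is precisely this shift/convention bookkeeping: one must be careful that the triangulated shift $[1]$ on the MCM side corresponds to $\cosyz$, that $\st$ is exact and sends the short exact sequence $0 \to \mathfrak{m} \to A \to k \to 0$ to the expected triangle, and that no internal degree shift $(j)$ sneaks in — this is where the Calabi-Yau condition (trivial canonical bundle, $a = 0$) is used essentially, as for general Gorenstein $A$ one would instead get $\Phi^{-1}(k^{\st}) \cong \O_X(a)[\,\cdot\,]$ with a twist. A secondary point is justifying that $\mathbf{R}\Gamma_{\geq 1}(\O_X)$ really is quasi-isomorphic to $\mathfrak{m}$ as a complex and not merely term-by-term equal: this follows from the vanishing $H^i(X,\O(j)) = 0$ for $i > 0$, $j \geq 1$, together with $H^0(X,\O) = k$ being killed by the truncation $\geq 1$. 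Since the earlier excerpt already grants us Orlov's and Buchweitz's equivalences and the relevant vanishing hypotheses, the argument is short once the conventions are pinned down; the reference to the elliptic-curve case in \cite{P} handles the remaining routine verifications verbatim.
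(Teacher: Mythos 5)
Your argument is correct and is essentially the same computation that the paper delegates to \cite{P}: the vanishing $H^i(X,\O(j))=0$ for $i>0$, $j>0$ (automatic for Calabi--Yau $X$ by Kodaira vanishing, since $\omega_X\cong\O_X$) identifies $\textbf{R}\Gamma_{\geq 1}(\O_X)$ with the irrelevant ideal $\mathfrak{m}=A_{\geq 1}$, and the triangle coming from $0\to\mathfrak{m}\to A\to k\to 0$ kills $A$ in the singularity category, giving $\Phi(\O_X)\cong k^{\st}[-1]$. The convention bookkeeping you flag does come out as you expect, with the Gorenstein parameter $a=0$ ensuring no internal twist appears.
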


Let $E \in D^b(X)$ be an object then we can define the spherical twist functor $T_E$ on $D^b(X)$. It is defined as cone of the derived evaluation map
$$
T_E(F)=\cone(\RHom(E,F) \otimes_k E \to F)
$$

On a Calabi-Yau variety object $E \in D^b(X)$ is called {\it spherical} if $\Ext^\bullet(E, E) \cong H^\bullet(S^{\dim X},k)$. For example, the structure sheaf $\O$ is a spherical object. Spherical objects and functors were introduced in \cite{ST01} and the following important theorem were proved
 \begin{Thm}
 Let $X$ be a smooth projective Calabi-Yau variety. If $E$ is a spherical object, then the spherical twist functor is an auto equivalence.
 \end{Thm}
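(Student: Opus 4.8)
The plan is to follow Seidel and Thomas \cite{ST01}, organised so that the Calabi--Yau hypothesis is used as sparingly as possible. Write $n=\dim X$. The first move is to recognise the twist as a Fourier--Mukai transform: applying $R\pi_{2*}$ and the projection formula to the defining triangle $\RHom(E,F)\otimes_k E\to F\to T_E(F)$ identifies $T_E$ with the Fourier--Mukai functor $\Phi_{\K}$ whose kernel is $\K=\cone\!\big(E^{\vee}\boxtimes E\to \O_{\Delta_X}\big)$ on $X\times X$, the map being the coevaluation $E^{\vee}\boxtimes E\to(E^{\vee}\boxtimes E)|_{\Delta_X}=\mathcal{E}nd(E)\xrightarrow{\ \mathrm{tr}\ }\O_{\Delta_X}$; in particular $T_E$ automatically has both adjoints. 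Since $X$ is Calabi--Yau its Serre functor is simply $[n]$, so $T_E$ manifestly commutes with $S_X$; and a fully faithful exact endofunctor of $D^b(X)$ commuting with the Serre functor is automatically an equivalence, because its essential image is a Serre-stable admissible subcategory, hence a direct summand, and $D^b(X)$ is indecomposable ($X$ being connected). So everything reduces to showing that $T_E$ is \emph{fully faithful}.

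For that I would invoke the standard criterion for Fourier--Mukai functors (Bridgeland): it is enough to compute, for all pairs of closed points $x,y\in X$, the graded space $\Hom^{\bullet}_{D^b(X)}(T_E k(x),T_E k(y))$ and to check that it has the same dimensions as $\Hom^{\bullet}_{D^b(X)}(k(x),k(y))$ — that is, $k$ concentrated in degree $0$ when $x=y$, zero in degrees outside $[0,n]$ always, and zero altogether when $x\neq y$. If $x$ lies off $\operatorname{Supp}(E)$ then $\RHom(E,k(x))=0$, so $T_E k(x)\cong k(x)$ and there is nothing to do; the substance is the case $x\in\operatorname{Supp}(E)$ (which, for $E$ a bundle such as $\O_X$, is every point), where one unfolds $T_E k(x)$ via its defining triangle and expresses $\Hom^{\bullet}(T_E k(x),T_E k(y))$ through the building blocks $\RHom(E,k(y))$, $\RHom(k(x),E)$, $\RHom(k(x),k(y))$ and $\RHom(E,E)$. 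The spherical identity $\RHom(E,E)\cong k\oplus k[-n]$ is exactly what makes the spurious contributions cancel and leaves $\Hom^{\bullet}(k(x),k(y))$. Along the way one records $T_E(E)\cong E[1-n]$ and $T_E(B)\cong B$ for $B\in E^{\perp}$, which also identify the quasi-inverse as the inverse twist $T_E^{-1}(F)\cong\cone\!\big(F\to \RHom(E,F[n])\otimes_k E\big)[-1]$; the shift by $[n]$ here is the one place where the Calabi--Yau form of Serre duality enters the explicit formula, replacing the usual $\RHom(F,E)^{\vee}$.

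The step I expect to be the main obstacle is precisely this point-sheaf computation when $x$ (or $y$) lies in the support of $E$: one must keep careful track of the connecting homomorphisms in the triangles, and this is essentially the only place where the spherical structure of $E$ does real work. A secondary technical point — and the reason to pass to the Fourier--Mukai picture at the outset rather than juggle cones in $D^b(X)$ by hand — is that the cone defining $T_E$ must be taken functorially in $F$ for the relevant natural transformations (adjunction units, the comparison with the quasi-inverse) to be well defined; writing $T_E=\Phi_{\K}$ makes this automatic.
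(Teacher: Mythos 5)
The paper does not actually prove this theorem: it is quoted from Seidel--Thomas \cite{ST01} and used as a black box. Your outline is, in substance, the standard proof from that source (in the streamlined form via Bridgeland's criterion, as in Huybrechts' book on Fourier--Mukai transforms): realize $T_E$ as the Fourier--Mukai functor with kernel $\cone(E^\vee\boxtimes E\to \O_{\Delta_X})$ so that both adjoints exist; verify full faithfulness on the spanning class of point sheaves; and use triviality of $\omega_X$ (equivalently, commutation with the Serre functor $[n]$) together with indecomposability of $D^b(X)$ for connected $X$ to upgrade a fully faithful functor to an equivalence. Each of these steps is correct, and the side facts you record along the way --- $T_E(E)\cong E[1-n]$, $T_E(B)\cong B$ when $\RHom(E,B)=0$, and the Calabi--Yau form of the inverse twist obtained by replacing $\RHom(F,E)^\vee$ with $\RHom(E,F)[n]$ via Serre duality --- all check out. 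The only caveat is that you defer, rather than execute, the one step where the spherical hypothesis actually does work: the computation of $\Hom^{\bullet}(T_Ek(x),T_Ek(y))$ when $x$ or $y$ lies in $\operatorname{Supp}(E)$. As written this is a correct and well-organized plan with the hard computation outsourced to \cite{ST01}; since the paper itself does exactly that (cites the reference and moves on), nothing further is required here.
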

 
Let $\L :D^b(X) \to D^b(X)$ be an autoequivalence defined by $\L(F)=F \otimes \O(1)$. Finally for functor $\sigma$ we have the following 
 
\begin{Lem}
 There is an isomorphism of functors 
 $$
 \sigma \cong T_\O \circ \L.
 $$
\end{Lem}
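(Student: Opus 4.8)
The plan is to trade the internal-degree shift for a change of Orlov's truncation parameter, and then to recognise the resulting autoequivalence of $D^b(X)$ as the spherical twist $T_\O$. First I would record the elementary identity $(1)\circ\textbf{R}\Gamma_{\geq l}\cong\textbf{R}\Gamma_{\geq l-1}\circ\L$ of functors $D^b(X)\to D^b(\gr-A_{\geq l-1})$: the degree-$j$ component of $(\textbf{R}\Gamma_{\geq l}(F))(1)$ is $\RHom(\O,F(j+1))=\RHom(\O,(F\otimes\O(1))(j))$, which is defined exactly for $j\geq l-1$, and the $A$-module structures match. Since Orlov's stabilization functor $\st$ is, up to the inclusions $D^b(\gr-A_{\geq i})\hookrightarrow D^b(\gr-A)$, the Verdier localization onto $D^{\gr}_{Sg}(A)$, it is equivariant for the internal shift, so composing with $\st$ gives $(1)\circ\Phi_l\cong\Phi_{l-1}\circ\L$. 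Taking $l=1$ (recall $\Phi=\Phi_1$) yields $\sigma=\Phi^{-1}\circ(1)\circ\Phi\cong\Phi_1^{-1}\circ\Phi_0\circ\L$, so it remains to prove $\Psi:=\Phi_1^{-1}\circ\Phi_0\cong T_\O$; note that $\Psi$ is an autoequivalence of $D^b(X)$ because $\Phi_0$ and $\Phi_1$ are equivalences.

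Next I would produce the comparison triangle between $\Phi_0$ and $\Phi_1$. For $F\in D^b(X)$ there is a short exact sequence of complexes of graded $A$-modules
\[
0\to\textbf{R}\Gamma_{\geq 1}(F)\to\textbf{R}\Gamma_{\geq 0}(F)\to\textbf{R}\Gamma(X,F)\otimes_k k\to 0,
\]
in which the quotient is the degree-$0$ strand $\RHom(\O,F)=\textbf{R}\Gamma(X,F)$, viewed as an $A$-module concentrated in degree $0$. Applying $\st$ and then $\Phi_1^{-1}$, and using the previous lemma $\Phi^{-1}(k^{\st})\cong\O[1]$ (together with the fact that $\textbf{R}\Gamma(X,F)\otimes_k k^{\st}$ is a finite direct sum of shifts of $k^{\st}$), turns this into a triangle functorial in $F$
\[
\RHom(\O,F)\otimes_k\O\xrightarrow{\;\varphi_F\;}F\to\Psi(F)\to\bigl(\RHom(\O,F)\otimes_k\O\bigr)[1],
\]
so that $\Psi(F)\cong\cone(\varphi_F)$ for a natural transformation $\varphi\colon\RHom(\O,-)\otimes_k\O\Rightarrow\mathrm{id}$.

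Finally I would identify $\varphi$ with the evaluation. Since $T_\O(F)=\cone(\ev_F)$ by definition, it suffices to show $\varphi=c\cdot\ev$ for some $c\in k^{\times}$. The functor $\RHom(\O,-)\otimes_k\O$ is the Fourier–Mukai transform with kernel $\O_{X\times X}$ (by the projection formula $\pi_{2*}\pi_1^{*}F\cong\textbf{R}\Gamma(X,F)\otimes_k\O$), while $\mathrm{id}$ has kernel $\O_{\Delta_X}$; natural transformations between them are classified by morphisms of kernels $\O_{X\times X}\to\O_{\Delta_X}$, i.e. by the one-dimensional space $H^0(\Delta_X,\O_{\Delta_X})\cong k$, spanned by the transformation induced by the restriction $\O_{X\times X}\twoheadrightarrow\O_{\Delta_X}$ — which is precisely $\ev$. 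Hence $\varphi=c\cdot\ev$, and $c\neq 0$ because $c=0$ would split the triangle, forcing $\Psi(\O)\cong\O\oplus(\RHom(\O,\O)\otimes_k\O)[1]$, a nontrivial direct sum, contradicting the indecomposability of $\Psi(\O)$. Combined with the first step this gives $\sigma\cong\Psi\circ\L\cong T_\O\circ\L$.

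The step I expect to be the main obstacle is the last one: verifying that the connecting morphism of the truncation triangle is, up to a nonzero scalar, the evaluation map. The argument above reduces this to the principle — standard for these particular kernels — that natural transformations of Fourier–Mukai functors come from morphisms of kernels; alternatively one argues by hand, tracing through $\Phi_1^{-1}$ the $A$-module multiplication $A_1\otimes\textbf{R}\Gamma(X,F)\to\textbf{R}\Gamma(X,F(1))$ that binds the degree-$0$ and degree-$1$ strands of $\textbf{R}\Gamma_{\geq 0}(F)$ and observing that it becomes the sheaf-level evaluation $\RHom(\O,F)\otimes_k\O\to F$; this bookkeeping is the technical heart of the lemma. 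A smaller point to check carefully in the first step is that Orlov's $\st$ really is equivariant for the internal shift, which however is implicit in his construction.
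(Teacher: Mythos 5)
The paper disposes of this lemma with a citation to Lemma 5.2.1 of Keller--Murfet--Van den Bergh, so your self-contained argument is a genuinely different route, and its architecture is sound. The first two steps are correct: the identity $(1)\circ\textbf{R}\Gamma_{\geq l}\cong\textbf{R}\Gamma_{\geq l-1}\circ\L$ is an immediate bookkeeping check, the quotient functor to $D^{\gr}_{Sg}(A)$ visibly commutes with the internal shift (which preserves perfect complexes), and the short exact sequence isolating the degree-$0$ strand of $\textbf{R}\Gamma_{\geq 0}(F)$, stabilized and transported through $\Phi_1^{-1}(k^{\st})\cong\O[1]$, does produce a functorial triangle
$$
\RHom(\O,F)\otimes_k\O\xrightarrow{\varphi_F}F\to\Psi(F)\to\RHom(\O,F)\otimes_k\O[1],
$$
so that everything reduces, exactly as you say, to identifying $\varphi$ with $\ev$ up to a nonzero scalar. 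Your nonvanishing argument also works in the intended setting: for a strict Calabi--Yau ($\O$ spherical, which the paper needs anyway for $T_\O$ to be an equivalence) one computes $\Psi(\O)\cong\O[1-\dim X]$, which is indecomposable, so the triangle cannot split.

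The gap is in your primary justification of the key step. The principle that natural transformations between Fourier--Mukai functors are classified by morphisms of kernels is false in general: the assignment from kernels to exact functors is neither full nor faithful at the level of triangulated categories, so $\Hom(\O_{X\times X},\O_{\Delta_X})\cong k$ does not by itself bound $\operatorname{Nat}\bigl(\RHom(\O,-)\otimes_k\O,\mathrm{id}\bigr)$; moreover your $\varphi$ is manufactured on the module side via Orlov's equivalence, so even granting the classification you would still have to show $\varphi$ is induced by a kernel morphism, which is essentially the same representability issue again. (A quick sanity check that the space of such transformations is not obviously one-dimensional: already $\Hom\bigl(\RHom(\O,\O)\otimes\O,\O\bigr)\cong H^0(X,\O)\oplus H^d(X,\O)\oplus\cdots$ has dimension at least $2$, and only naturality in $F$ could cut this down.) What actually carries the proof is the computation you relegate to an ``alternative'': unwinding the connecting map of the truncation triangle, i.e.\ checking that the extension of $\RHom(\O,F)\otimes_k k$ by $\textbf{R}\Gamma_{\geq 1}(F)$ is classified by the multiplication $A_1\otimes\RHom(\O,F)\to\RHom(\O,F(1))$ and that under the identification $\Phi_1(\O)\cong k^{\st}[-1]$ (coming from $0\to A_{\geq 1}\to A\to k\to 0$) this becomes the sheaf-level evaluation. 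That verification is exactly the content of the cited KMVdB lemma and must be carried out, not merely indicated, for the proof to be complete; with it in place, the rest of your argument goes through.
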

\begin{proof}
This is essentially reformulation of Lemma 5.2.1 in \cite{KMVdB11} for $i=1$.
\end{proof}

\section{Resolution of the Diagonal}

In this section we follow Kawamata's presentation \cite{Kawamata2002} of the resolution of the diagonal.

Let $X$ be a projective variety, an ample line bundle $L$ is called {\it Koszul} if the homogeneous coordinate ring
\[
A = \bigoplus_{m \geq 0} H^0(X, L^m)
\]
is Koszul.

We define vector spaces $B_m$ ($m \ge 0$) by $B_0 = A_0$, $B_1 = A_1$ and
\[
B_m = \text{Ker}(B_{m-1} \otimes A_1 \to 
B_{m-2} \otimes A_2)
\]
for $m \ge 2$, where the homomorphism is induced from the multiplication in  
$A$.
We set $A_m = B_m = 0$ for $m < 0$.

One of the ways to say that $A$ is Koszul algebra is to require that 
the sequence of natural homomorphisms 
\begin{equation}\label{seq-module}
\begin{split}
&\dots \to B_m \otimes A(-m) \to 
B_{m-1} \otimes A(-m+1) \to 
\dots \\
&\to B_1 \otimes A(-1) \to A \to k \to 0
\end{split}
\end{equation}
is exact, where the shifted module $A(j)$ is defined by 
$A(j)_k = A_{j+k}$.

By \cite{Backelin}~Theorem~2, the subring 
$A^{(d)} = \bigoplus_{m \geq 0} A_{dm}$ of $A$ is a Koszul algebra
for a sufficiently large integer $d$, i.e. $L^d$ is a Koszul line bundle. 

We define sheaves $R_m$ ($m \ge 0$) on $X$ by $R_0 = \mathcal{O}_X$ and 
\[
R_m = \text{Ker}(B_m \otimes \mathcal{O}_X \to 
B_{m-1} \otimes L)
\]
for $m \ge 1$, where the homomorphism is induced from the natural 
homomorphisms $B_m \to B_{m-1} \otimes A_1$ and 
$A_1 \otimes \mathcal{O}_X \to L$.

If we take the sequence of associated sheaves to
the tensor product of $(\ref{seq-module})$ with $A(m)$,
we obtain an exact sequence
\begin{equation}\label{seq-sheaf}
0 \to R_m \to B_m \otimes \mathcal{O}_X
\to B_{m-1} \otimes L \to 
\dots \to B_1 \otimes L^{m-1} \to L^m \to 0.
\end{equation}

\begin{Lem}\label{AR}
There is an exact sequence
\[
\begin{split}
&0 \to A_0 \otimes R_m \to A_1 \otimes R_{m-1} 
\to \\
&\dots \to 
A_{m-1} \otimes R_1 \to A_m \otimes
R_0 \to L^m \to 0.
\end{split}
\]
\end{Lem}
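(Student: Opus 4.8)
My plan is to recover the sequence of Lemma~\ref{AR} as an edge of a bounded bicomplex built from the two families of exact sequences already at hand: the graded strands of the Koszul complex $(\ref{seq-module})$ of $k$ over $A$, and the sheaf resolutions $(\ref{seq-sheaf})$. Comparing the two spectral sequences of this bicomplex will give the statement.

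Concretely, I would fix $m$ and set
\[
D^{i,n}=A_n\otimes B_{m-n-i}\otimes L^{i},\qquad i,n\ge 0,\ i+n\le m,
\]
with $D^{i,n}=0$ otherwise; this is a finite diagram of coherent sheaves on $X$. The vertical differential $D^{i,n}\to D^{i+1,n}$ is induced by the comultiplication $B_{m-n-i}\to B_{m-n-i-1}\otimes A_1$ together with the natural map $A_1\otimes L^{i}\to L^{i+1}$; the horizontal differential $D^{i,n}\to D^{i,n+1}$ by the comultiplication $B_{m-n-i}\to A_1\otimes B_{m-n-i-1}$ together with the multiplication $A_n\otimes A_1\to A_{n+1}$. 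The $n$-th column of $D$ is $A_n$ tensored with the sequence $(\ref{seq-sheaf})$ for $j=m-n$, its leftmost term $R_{m-n}$ deleted, so by exactness of $(\ref{seq-sheaf})$ its cohomology is $A_n\otimes R_{m-n}$ in degree $0$ and zero above. The $i$-th row of $D$ is $L^{i}$ tensored with the degree-$(m-i)$ graded component of $(\ref{seq-module})$; since the latter is an exact sequence of vector spaces whenever $m-i\ge 1$, each such row is exact, while the row $i=m$ consists of the single sheaf $L^{m}$ sitting in column $n=0$.

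Running the spectral sequence of $D$ that takes row cohomology first, everything vanishes except $L^{m}$ in bidegree $(m,0)$, so $\mathrm{Tot}(D)$ has cohomology $L^{m}$ in total degree $m$ and nothing else. Running instead the spectral sequence that takes column cohomology first, the first page is concentrated on the single row $i=0$, with column $n$ equal to $A_n\otimes R_{m-n}$ and $d_1$-differential $A_n\otimes R_{m-n}\to A_{n+1}\otimes R_{m-n-1}$ the map induced by the horizontal differential of $D$; being concentrated in one row it degenerates at $E_2$. Matching the two computations forces the complex
\[
0\to A_0\otimes R_m\to A_1\otimes R_{m-1}\to\cdots\to A_{m-1}\otimes R_1\to A_m\otimes R_0\to L^{m}\to 0
\]
to be exact, the final map being the multiplication $A_m\otimes\mathcal{O}_X=A_m\otimes R_0\to L^{m}$, which is exactly Lemma~\ref{AR}.

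The step needing genuine care — the main obstacle — is checking that $D$ is an honest bicomplex, i.e. that its two differentials anticommute. The vertical and horizontal differentials involve respectively the comultiplications $B_q\to B_{q-1}\otimes A_1$ and $B_q\to A_1\otimes B_{q-1}$, so the verification comes down to the compatibility of these two maps: both are restrictions of the canonical inclusion $B_q\hookrightarrow A_1^{\otimes q}$ supplied by quadraticity of $A$ (one has $B_q=\bigcap_i A_1^{\otimes i}\otimes\mathrm{Ker}(A_1\otimes A_1\to A_2)\otimes A_1^{\otimes(q-2-i)}$), and the remaining maps operate in disjoint tensor slots, so everything commutes up to the usual Koszul sign once this description is in place; the same unwinding also identifies the rows and columns as described above. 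Since $D$ is a bounded bicomplex, there are no convergence issues.
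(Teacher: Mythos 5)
Your argument is correct. The paper itself gives no proof of this lemma, deferring to Kawamata \cite{Kawamata2002}, and the argument there is essentially the same comparison of the two spectral sequences of the double complex with terms $A_n\otimes B_{m-n-i}\otimes L^i$ that you construct, so you have reproduced the intended proof rather than found a different route. The one point worth making fully explicit is the one you already flag: your horizontal differential uses the comultiplication $B_q\to A_1\otimes B_{q-1}$, so the rows are strands of the \emph{right-handed} Koszul complex rather than literally of $(\ref{seq-module})$; this is harmless because $A$ is commutative, so order-reversal of $A_1^{\otimes q}$ preserves $W=\ker(A_1\otimes A_1\to A_2)$ and hence each $B_q=\bigcap_{i}A_1^{\otimes i}\otimes W\otimes A_1^{\otimes (q-2-i)}$, which intertwines the two comultiplications and transports exactness of $(\ref{seq-module})$ to your rows.
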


On the product $X \times X$ 
by Lemma~\ref{AR}, we obtain a homomorphism 
$L^{-m} \boxtimes R_m \to L^{-m+1} \boxtimes R_{m-1}$
as a composition of the following homomorphisms
\begin{equation}\label{LRhom}
L^{-m} \boxtimes R_m \to A_1 \otimes (L^{-m} \boxtimes R_{m-1})
\to L^{-m+1} \boxtimes R_{m-1}.
\end{equation}

\begin{Thm}\label{diagonal1}
Let $X$ be a projective variety, $L$ be a Koszul line bundle, and
$\Delta_X \subset X \times X$ the diagonal subvariety of the direct product.
Then the complex of sheaves on $X \times X$ 
\[
\begin{split}
&\dots \to L^{-m} \boxtimes R_m \to L^{-m+1} \boxtimes R_{m-1} \to \dots \\ 
&\to L^{-1} \boxtimes R_1 \to \mathcal{O}_X \boxtimes \mathcal{O}_X
\to \mathcal{O}_{\Delta_X} \to 0
\end{split}
\]
is exact.
\end{Thm}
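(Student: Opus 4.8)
The plan is to check exactness of the complex — placed so that $\mathcal{O}_{\Delta_X}$ sits in degree $0$ and the remaining terms in negative degrees — fibrewise over the first projection $\pi_1\colon X\times X\to X$. The key observation is that every term $L^{-m}\boxtimes R_m=\pi_1^*L^{-m}\otimes\pi_2^*R_m$ is flat over $X$ relative to $\pi_1$: the sheaf $\pi_2^*R_m$ is $\pi_1$-flat because it is pulled back from the second factor, and tensoring with the line bundle $\pi_1^*L^{-m}$ preserves this; the sheaf $\mathcal{O}_{\Delta_X}=\delta_*\mathcal{O}_X$ is $\pi_1$-flat as well. By $\pi_1$-flatness the derived restriction of the complex to a fibre $\{x\}\times X$ is computed naively, and the hyper-Tor spectral sequence identifies, in the top nonvanishing degree, the homology of that restriction with the restriction of the corresponding homology sheaf of the complex. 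Since the homology sheaves are coherent and vanish in all degrees $>0$, and since a coherent sheaf that restricts to zero on every fibre $\{x\}\times X$ must vanish, it suffices to prove that $\iota_x^*C^\bullet$ is exact for every closed point $x\in X$, where $\iota_x\colon\{x\}\times X\hookrightarrow X\times X$.

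Next I would make the restriction explicit. Under $\iota_x^*$ the factor $\pi_1^*L^{-m}$ becomes the trivial line bundle with one-dimensional fibre $L^{-m}_x$, the factor $\pi_2^*R_m$ becomes $R_m$, the term $\mathcal{O}_X\boxtimes\mathcal{O}_X$ becomes $\mathcal{O}_X$, and $\mathcal{O}_{\Delta_X}$ becomes the skyscraper $k(x)$, since $\Delta_X\cap(\{x\}\times X)=\{(x,x)\}$ is a reduced, transverse intersection. After trivialising $L_x$, the differential \eqref{LRhom} restricts to the composition of the comultiplication $R_m\to A_1\otimes R_{m-1}$ — the first map in Lemma~\ref{AR} — with the evaluation functional $\ev_x\colon A_1\to L_x\cong k$ applied to the $A_1$-factor, and the augmentation restricts to evaluation $\mathcal{O}_X\to k(x)$ at $x$. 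Thus $\iota_x^*C^\bullet$ is the sequence of sheaves on $X$
\[
\cdots\to R_m\to R_{m-1}\to\cdots\to R_1\to\mathcal{O}_X\to k(x)\to 0 ,
\]
and the theorem reduces to the statement that this sequence is exact. That it is at least a complex follows from Lemma~\ref{AR}: the obstruction to two consecutive arrows composing to zero lies in $\ker(A_1\otimes A_1\to A_2)=B_2$, and $\ev_x\otimes\ev_x=\ev_x\circ(\text{multiplication of }A)$ kills $B_2$.

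For exactness I would argue directly. Splicing together, over all $m$, the exact sequences of Lemma~\ref{AR} with the sheaf sequences~\eqref{seq-sheaf} yields short exact sequences $0\to R_m\to B_m\otimes\mathcal{O}_X\to R_{m-1}\otimes L\to0$; feeding these into the restricted complex, using that $\ev_x$ is multiplicative and that the maps $R_m\hookrightarrow A_1\otimes R_{m-1}$ are injective, one shows inductively that $\ker(\bar d_m)=R_m\cap(H_x\otimes R_{m-1})$ equals the image of $\bar d_{m+1}$, where $H_x=\ker\ev_x$, and that the cokernel of $\bar d_1$ is $\mathcal{O}_X/\mathcal{I}_x=k(x)$, with $\mathcal{I}_x$ the ideal sheaf of $x$. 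Equivalently one can restrict further to the affine chart $X_{s_0}$ of a section $s_0\in A_1$ with $s_0(x)\ne0$, where $L$ trivialises: there $\iota_x^*C^\bullet$ is the sheafification of an explicit complex of finitely generated graded $A$-modules assembled from the Koszul resolution~\eqref{seq-module} of $k$ and the homogeneous coordinate ring $A/\mathfrak{p}_x\cong k[u]$ of the point $x$, and its exactness rests on the Koszulity of $A$.

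I expect this last step to be the main obstacle, and the difficulty is twofold. First, one has to pin down the differentials of $\iota_x^*C^\bullet$ precisely in terms of the coalgebra structure on $\bigoplus_m B_m$, the inclusions $R_m\hookrightarrow B_m\otimes\mathcal{O}_X$, and the evaluation maps, and then extract exactness from the module-level statements~\eqref{seq-module} and Lemma~\ref{AR}; this is where the Koszulity hypothesis enters essentially, since it is precisely what makes those sequences exact. Second, the tempting shortcut of replacing the (generally non-locally-free) sheaves $R_m$ by the finite locally free resolutions furnished by~\eqref{seq-sheaf} does not work directly: the resulting double complex is unbounded and has infinite direct sums in each total degree, so only its column-filtration spectral sequence converges and one cannot read off the answer from the acyclicity of the sheafified Koszul complex. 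Carrying out the bookkeeping carefully, along the lines of~\cite{Kawamata2002}, is what makes the argument go through.
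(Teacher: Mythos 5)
Your overall architecture is sound and is the standard route for statements of Beilinson type: every term $L^{-m}\boxtimes R_m=\pi_1^*L^{-m}\otimes\pi_2^*R_m$, as well as $\mathcal{O}_{\Delta_X}$, is flat over the first factor, so exactness of the bounded-above complex may be tested on its (underived) restrictions to the fibres $\{x\}\times X$; and you identify the restricted complex correctly as
\[
\cdots\to R_m\otimes L_x^{-m}\to\cdots\to R_1\otimes L_x^{-1}\to\mathcal{O}_X\to k(x)\to 0,
\]
with differentials obtained from the injections $R_m\hookrightarrow A_1\otimes R_{m-1}$ of Lemma~\ref{AR} by evaluating the $A_1$-factor at $x$. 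Your verification that $\bar d_{m-1}\circ\bar d_m=0$ via the multiplicativity of $\ev_x$ on $B_2=\ker(A_1\otimes A_1\to A_2)$ is also correct. (For calibration: the paper itself offers no proof of this theorem and defers entirely to \cite{Kawamata2002}, so the comparison can only be against that reference.)

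The genuine gap is the exactness of the fibrewise complex, and this is not a peripheral detail: the reduction to fibres and the identification of the differentials are formal, so the entire content of the theorem --- the only place Koszulity of $A$ does any work beyond making the maps well defined --- is concentrated in exactly the step you leave open. What you offer there is a restatement of the claim (that $\ker\bar d_m=R_m\cap(\ker(\ev_x)\otimes R_{m-1})$ coincides with $\operatorname{im}\bar d_{m+1}$, to be shown \emph{inductively}) without setting up the induction, plus an alternative sketch over an affine chart $X_{s_0}$ that is likewise not carried out; you then candidly acknowledge that this is the main obstacle. Diagnosing correctly that the naive unbounded double complex assembled from \eqref{seq-module} and \eqref{seq-sheaf} has convergence problems does not substitute for an argument that avoids them. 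To close the gap one must actually deduce the fibrewise exactness from the Koszul exactness of \eqref{seq-module}: for instance, exactness at the spot $R_{m-1}$ involves only the finite stretch $R_{m+1}\to R_m\to R_{m-1}$, so one can work with the short exact sequences $0\to R_m\to B_m\otimes\mathcal{O}_X\to R_{m-1}\otimes L\to0$ and, after twisting by a sufficiently high power of $L$ depending on $m$, compare global sections with the corresponding exact piece of \eqref{seq-module}; this is essentially the bookkeeping done in \cite{Kawamata2002}. As written, the proposal establishes the framework but not the theorem.
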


Proofs of the lemma and the theorem can be found in \cite{Kawamata2002}.

\section{The Duality formula}

We assume that $X$ is a smooth projective variety and $L$ is an ample line 
bundle on $X$, then for sufficiently large integer $d$ line bundle $\O(1)=L^d$
and all its powers are Koszul and have trivial high cohomology $H^i(X, \O(j))=0$.

The following short exact sequence immediately follows from the definition of the sheaves $R_m$ 
$$
0 \to R_m \to B_m \otimes \O \to R_{m-1}(1) \to 0.
$$

\begin{Lem}
For $m \geq 0$ there is an isomorphism $B_m \cong H^0(X, R_{m-1}(1)).$
\end{Lem}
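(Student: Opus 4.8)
The plan is to compute $H^0(X, R_{m-1}(1))$ directly from the definition of the sheaf $R_{m-1}$, using only left-exactness of global sections; no cohomology vanishing will actually be needed. Take $m \geq 2$. By definition $R_{m-1} = \ker\bigl(B_{m-1}\otimes\O_X \xrightarrow{\ \partial\ } B_{m-2}\otimes\O(1)\bigr)$, so twisting by $\O(1)$ (an exact operation) gives
$$
R_{m-1}(1) = \ker\bigl(B_{m-1}\otimes\O(1) \xrightarrow{\ \partial(1)\ } B_{m-2}\otimes\O(2)\bigr),
$$
where $\partial(1)$ denotes the twist of $\partial$ by $\O(1)$. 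Applying the left-exact functor $H^0(X,-)$ then yields
$$
H^0(X, R_{m-1}(1)) = \ker\Bigl(H^0(X, B_{m-1}\otimes\O(1)) \xrightarrow{\ H^0(\partial(1))\ } H^0(X, B_{m-2}\otimes\O(2))\Bigr),
$$
and since $B_{m-1}\otimes\O(1)$ and $B_{m-2}\otimes\O(2)$ are direct sums of copies of $\O(1)$, resp.\ $\O(2)$, while $A_i = H^0(X,\O(i))$ by the very definition of the coordinate ring, the two outer terms become $B_{m-1}\otimes A_1$ and $B_{m-2}\otimes A_2$.

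The second step is to identify the middle arrow with the homomorphism defining $B_m$. By construction $\partial$ is the composite $B_{m-1}\otimes\O_X \to B_{m-2}\otimes A_1\otimes\O_X \to B_{m-2}\otimes\O(1)$, where the first arrow is induced by the inclusion $B_{m-1}\hookrightarrow B_{m-2}\otimes A_1$ that is built into the definition of $B_{m-1}$ as a kernel, and the second by the evaluation $A_1\otimes\O_X\to\O(1)$. Twisting by $\O(1)$ and passing to global sections, one checks that $H^0$ of the twisted evaluation $A_1\otimes\O(1)\to\O(2)$ is the multiplication $\mu\colon A_1\otimes A_1\to A_2$ of $A$, so that $H^0(\partial(1))$ becomes the composite
$$
B_{m-1}\otimes A_1 \hookrightarrow B_{m-2}\otimes A_1\otimes A_1 \xrightarrow{\ \mathrm{id}\otimes\mu\ } B_{m-2}\otimes A_2.
$$
This is exactly the homomorphism appearing in the definition $B_m = \ker\bigl(B_{m-1}\otimes A_1\to B_{m-2}\otimes A_2\bigr)$, and combining with the first step gives $H^0(X, R_{m-1}(1)) = B_m$. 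The remaining values of $m$ are immediate: for $m = 1$ one has $R_0 = \O_X$, so $H^0(X, R_0(1)) = H^0(X,\O(1)) = A_1 = B_1$; and for $m = 0$ one reads $R_{-1}(1)$ as $\O_X$, so the statement is the tautology $B_0 = k = H^0(X,\O_X)$. As a variant one could run the first step starting from the exact sequence $(\ref{seq-sheaf})$ for $R_{m-1}$ twisted by $\O(1)$ rather than from the bare definition of $R_{m-1}$; the content is the same.

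I expect the main obstacle to be the naturality bookkeeping in the second step: one must make sure that, after taking associated sheaves, twisting, and applying $H^0$, the resulting linear map is \emph{the} multiplication-induced map that defines $B_m$, and not merely \emph{some} homomorphism $B_{m-1}\otimes A_1\to B_{m-2}\otimes A_2$ with the right source and target. Everything else is formal: it uses only $A_i = H^0(X,\O(i))$ and the left-exactness of $H^0(X,-)$.
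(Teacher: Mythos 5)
Your proof is correct and follows essentially the same route as the paper: twist the defining kernel presentation of $R_{m-1}$ by $\O(1)$, apply the left-exact functor $H^0(X,-)$, identify $H^0(X,\O(i))$ with $A_i$, and recognize the resulting map as the one defining $B_m$. The paper's own proof is just the two-line computation you spell out, so your extra care with the naturality of the middle arrow and the small values of $m$ only makes the argument more complete.
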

\begin{proof}
\[
\begin{split}
H^0(X,R_{m-1}(1))=&\ker(B_{m-1} \otimes H^0(X, \O(1)) \to B_{m-2} \otimes H^0(X, \O(2))=\\
&\ker(B_{m-1} \otimes A_1 \to B_{m-2} \otimes A_2)=B_m.
\end{split}
\]
\end{proof}

\begin{Lem}
For $m \geq 0$, $i>0$ and $j > 0$ the cohomology groups $H^i(X, R_m (j))$ vanish
$$
H^i(X, R_m (j)) =0.
$$
\end{Lem}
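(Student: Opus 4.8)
The plan is to induct on $m \ge 0$. The base case $m=0$ needs nothing: $R_0 = \O$, so the claim is just the standing hypothesis $H^i(X,\O(j))=0$ for $i,j>0$. For the inductive step the engine is the short exact sequence $0 \to R_m \to B_m \otimes \O \to R_{m-1}(1) \to 0$ displayed just above the statement; twisting it by $\O(j)$ gives $0 \to R_m(j) \to B_m \otimes \O(j) \to R_{m-1}(j+1) \to 0$, and I would feed this into the long exact cohomology sequence.

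For $i \ge 2$ the induction closes immediately. The relevant stretch of the long exact sequence is $H^{i-1}(X,R_{m-1}(j+1)) \to H^i(X,R_m(j)) \to B_m \otimes H^i(X,\O(j))$; the left term vanishes by the inductive hypothesis applied to $R_{m-1}$ (note $i-1\ge 1$ and $j+1>0$) and the right term vanishes by the standing hypothesis, so $H^i(X,R_m(j))=0$.

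The case $i=1$ is where the real work lies. Since $H^1(X,\O(j))=0$ and $H^0(X,\O(j))=A_j$, the long exact sequence gives $H^1(X,R_m(j)) \cong \coker\bigl(\phi\colon B_m\otimes A_j \to H^0(X,R_{m-1}(j+1))\bigr)$, so it is enough to show $\phi$ is surjective. I would do this by identifying both the source and the target of $\phi$ with pieces of the Koszul complex $(\ref{seq-module})$. On the target side, applying the left-exact functor $H^0(X,-)$ to the sequence $0 \to R_{m-1}(j+1) \to B_{m-1}\otimes\O(j+1) \to R_{m-2}(j+2) \to 0$ and composing with the embedding $R_{m-2}(j+2)\hookrightarrow B_{m-2}\otimes\O(j+2)$ identifies $H^0(X,R_{m-1}(j+1))$ with $\ker\bigl(B_{m-1}\otimes A_{j+1}\to B_{m-2}\otimes A_{j+2}\bigr)$, the displayed map being the degree-$(m+j)$ part of a differential of $(\ref{seq-module})$; the case $m=1$ is handled separately using $R_0=\O$ and the fact that $A$ is generated in degree one. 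On the source side, unwinding the construction of the arrow $B_m\otimes\O \to R_{m-1}(1)$ — by definition it is induced from $B_m\to B_{m-1}\otimes A_1$ and $A_1\otimes\O\to\O(1)$ — shows that $\phi$ is precisely the degree-$(m+j)$ part of the differential $B_m\otimes A(-m) \to B_{m-1}\otimes A(-m+1)$ of $(\ref{seq-module})$. Koszulity of $A$ means $(\ref{seq-module})$ is exact, so the image of this differential equals the kernel of the next one; passing to degree $m+j$ gives $\operatorname{im}(\phi)=H^0(X,R_{m-1}(j+1))$, hence $\phi$ is onto and $H^1(X,R_m(j))=0$.

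The hard part will be the bookkeeping hidden in the $i=1$ step: one must check that the homomorphisms obtained by applying $H^0(X,-)$ to the various defining sequences of the $R_\bullet$ agree, on the nose and compatibly with the embeddings $R_\bullet(\bullet)\hookrightarrow B_\bullet\otimes\O(\bullet)$, with the differentials of the module complex $(\ref{seq-module})$. This should follow by tracing through the definitions — it is essentially the content of the assertion, quoted from \cite{Kawamata2002}, that $(\ref{seq-sheaf})$ is obtained from $(\ref{seq-module})$ by sheafifying and twisting — and once it is set up, surjectivity of $\phi$ is a purely formal consequence of Koszulity, while the rest of the proof is a routine diagram chase. (An alternative route for $i=1$ is to identify $R_m(j)$ with the associated sheaf of the $m$-th syzygy of $k$, shifted into internal degree $m+j$, and invoke the comparison between sheaf cohomology and local cohomology; the argument above is meant to sidestep local cohomology.)
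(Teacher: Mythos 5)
Your proposal follows the paper's proof exactly: induction on $m$ via the long exact cohomology sequence of $0 \to R_m(j) \to B_m \otimes \O(j) \to R_{m-1}(j+1) \to 0$, with $i \ge 2$ closed by the inductive hypothesis and the vanishing of $H^i(X,\O(j))$, and $i=1$ reduced to Koszulity of $A$. The paper disposes of the $i=1$ case with the single sentence ``vanishing follows from assumption that $A$ is Koszul''; your identification of the connecting map $B_m \otimes A_j \to H^0(X,R_{m-1}(j+1))$ with the degree-$(m+j)$ part of the Koszul differential of $(\ref{seq-module})$ is precisely the content hidden behind that sentence, and it is correct.
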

\begin{proof}
We prove vanishing by induction on $m$. If $m=0$ then $R_m = \O$ 
and vanishing follows from assumptions on $\O(1)$.
If $m>0$ then tensoring short exact sequence 
$$
0 \to R_m \to B_m \otimes \O \to R_{m-1}(1) \to 0.
$$
with $\O(j)$ and considering long exact sequence of cohomology groups
\[
\begin{split}
\ldots \to &H^{i-1}(X, R_{m-1}(j+1)) \to \\
\to H^i(X,R_m(j)) \to H^i(X, \O(j)) \otimes B_m \to &H^i(X, R_{m-1}(j+1)) \to \ldots
\end{split}
\]
we get vanishing for $i>1$ by assumption of induction and assumptions of $\O(1)$. 
For $i=1$ vanishing follows from assumption that $A$ is Koszul.
\end{proof}

Combining these two lemmas we get

\begin{Thm}
Sheaves $R_m$ satisfy recursive relation in the derived category $D^b(X)$ 
$$
R_m[1]=(T_\O \circ \L) (R_{m-1}).
$$
\end{Thm}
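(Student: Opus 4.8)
The plan is to exhibit both sides of the claimed identity as the cone of one and the same morphism of sheaves on $X$. On the left, I start from the short exact sequence
$$
0 \to R_m \to B_m \otimes \O \to R_{m-1}(1) \to 0
$$
recorded at the beginning of this section; regarded as an exact triangle in $D^b(X)$, it shows that the cone of the surjection $B_m \otimes \O \to R_{m-1}(1)$ is canonically isomorphic to $R_m[1]$. Hence it suffices to identify $(T_\O \circ \L)(R_{m-1})$ with the cone of this same surjection.

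I then unwind the right-hand side. By definition $(T_\O\circ\L)(R_{m-1}) = T_\O(R_{m-1}(1))$ is the cone of the derived evaluation map $\RHom(\O, R_{m-1}(1)) \otimes_k \O \to R_{m-1}(1)$. Here the two preceding lemmas are exactly what is needed: the vanishing lemma applied with $j = 1$ gives $H^i(X, R_{m-1}(1)) = 0$ for all $i > 0$, so $\RHom(\O, R_{m-1}(1))$ is the single vector space $H^0(X, R_{m-1}(1))$ sitting in cohomological degree $0$, and the lemma $B_m \cong H^0(X, R_{m-1}(1))$ rewrites this complex as $B_m$. Under these identifications the evaluation map becomes a morphism $B_m \otimes \O \to R_{m-1}(1)$.

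The one point that deserves care — and which I expect to be the only real obstacle — is checking that the morphism $B_m \otimes \O \to R_{m-1}(1)$ produced by the evaluation map agrees on the nose with the surjection in the short exact sequence, not merely up to an abstract isomorphism of cones. Both maps are by construction the canonical ``evaluate global sections'' morphism of $R_{m-1}(1)$, precomposed with the identification of its global sections with $B_m$; I would make this explicit by tracing through the definition of $B_m$ as a kernel and of $R_m$ as the kernel of $B_m \otimes \O \to B_{m-1} \otimes L$, which pins down the map in the short exact sequence as the same evaluation morphism. Once this is in place, $R_m[1]$ and $(T_\O\circ\L)(R_{m-1})$ are cones of the same morphism, hence isomorphic in $D^b(X)$; here $m \ge 1$, with $R_0 = \O$ serving as the base of the construction.
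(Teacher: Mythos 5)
Your proof is correct and follows essentially the same route as the paper: both realize $R_m[1]$ and $(T_\O\circ\L)(R_{m-1})$ as cones over $B_m\otimes\O\to R_{m-1}(1)$, using the vanishing lemma and the identification $B_m\cong H^0(X,R_{m-1}(1))$ to reduce the spherical twist to that single evaluation map. Your explicit attention to checking that the two morphisms agree (not just the objects) is a point the paper passes over with the phrase ``have the same objects and maps,'' so nothing further is needed.
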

\begin{proof}
By definition of spherical twist $T_\O$ we have distinguished triangle
$$
\oplus_i H^i(X, R_{m-1}(1)) \otimes \O \to R_{m-1}(1) \to (T_\O \circ \L) (R_{m-1}) \to \ldots
$$
By two previous lemmas $\oplus_i H^i(X, R_{m-1}(1)) \cong B_m$. Short exact sequence 

$$
0 \to R_m \to B_m \otimes \O \to R_{m-1}(1) \to 0.
$$
gives distinguished triangle
$$
R_m \to B_m \otimes \O \to R_{m-1}(1) \to R_m[1]
$$
Noticing that these two distinguished triangles have the same objects and maps 
we conclude that $(T_\O \circ \L) (R_{m-1}) \cong R_m[1]$.
\end{proof}

We can reformulate the result as follows

\begin{Cor}
Sheaves $R_m$ form the orbit of the functor $\gamma=T_\O \circ \L[-1]$ for $m \geq 0$
$$
R_m = \gamma^m(\O).
$$
\end{Cor}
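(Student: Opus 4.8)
The statement is an essentially formal consequence of the Theorem just proved, so the plan is short. First I would record the observation that the shift functor $[-1]$ commutes with both $\L$ (tensoring with a line bundle is exact and commutes with shifts) and $T_\O$ (it is an exact autoequivalence of $D^b(X)$, being the cone of a natural transformation), so that the three possible readings of $\gamma = T_\O \circ \L[-1]$ all agree, and in particular $\gamma(F)[1] = (T_\O \circ \L)(F)$ for every $F \in D^b(X)$.

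Next I would rewrite the recursion of the previous Theorem. That result gives $R_m[1] = (T_\O \circ \L)(R_{m-1})$; applying $[-1]$ to both sides and using the commutation just noted, this becomes
$$
R_m = (T_\O \circ \L)(R_{m-1})[-1] = \gamma(R_{m-1})
$$
for every $m \geq 1$.

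Finally I would run the obvious induction on $m$. The base case is $R_0 = \mathcal{O}_X = \gamma^0(\O)$, which holds by the definition of $R_0$. For the inductive step, assuming $R_{m-1} = \gamma^{m-1}(\O)$, the recursion above yields $R_m = \gamma(R_{m-1}) = \gamma(\gamma^{m-1}(\O)) = \gamma^m(\O)$, completing the induction.

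There is no real obstacle here: the only point requiring any care is the bookkeeping of shifts, i.e. confirming that $[-1]$ is central enough to be moved past $T_\O$ and $\L$; once that is in place the Corollary is immediate from the Theorem. If one wanted to be fully self-contained one could instead re-derive $R_m = \gamma(R_{m-1})$ directly by combining the short exact sequence $0 \to R_m \to B_m \otimes \O \to R_{m-1}(1) \to 0$ with the two cohomology lemmas (so that the evaluation map defining $T_\O(\L(R_{m-1}))$ is identified with $B_m \otimes \O \to R_{m-1}(1)$), but this merely repeats the argument of the preceding Theorem.
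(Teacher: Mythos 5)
Your proposal is correct and follows the same route as the paper: the paper's proof simply iterates the identity $R_m[1]=(T_\O \circ \L)(R_{m-1})$ to get $R_m[m]=(T_\O \circ \L)^m(\O)$ and then absorbs the shifts into $\gamma$. Your version merely makes explicit the (true and routine) commutation of $[-1]$ with $T_\O$ and $\L$ and phrases the iteration as an induction, which is the same argument.
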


\begin{proof}
Iterating identity $R_m[1]=(T_\O \circ \L) (R_{m-1})$ we get $R_m[m]=(T_\O \circ \L)^m(\O)$.
\end{proof}

The key observation is that two functors $\sigma$ used in the formula for Betti numbers and $\gamma$ are related by $\sigma=\gamma[1]$.

Let us set by definition $R_m = \gamma^m(\O)$ for $m \in \Z$. For $m<0$ we get objects of the derived category $R_m \in D^b(X)$, for example $R_{-1}=\O(-1)[d]$, where $d=\dim X$.

If $F$ is any object in the derived category $D^b(X)$ we define cohomology numbers of $F$ as the following dimensions of the hypercohomology groups 
$$
h^{ij}(F)=\dim \mathbb{H}^i(X, F(j)),
$$
where $i,j \in \Z$.

Combining all our observations together we get the following version of Koszul duality between cohomology numbers of $F$ and Betti numbers of the MCM module $M$ corresponding to $F$ under Orlov's equivalence. This duality is controlled by two families of objects $\O(i)$ and $R(j)$ appearing in the box-product resolution of the diagonal

\[
\begin{split}
&\dots \to \O(-m) \boxtimes R_m \to \O(-m+1) \boxtimes R_{m-1} \to \dots \\ 
&\to \O(-1) \boxtimes R_1 \to \mathcal{O}_X \boxtimes \mathcal{O}_X
\to \mathcal{O}_{\Delta_X} \to 0.
\end{split}
\]

\begin{Thm}
Let $X$ be a smooth projective Calabi-Yau variety, and let $L$ be a Koszul line bundle s.t. $H^i(X, \O(j))=0$ for $i,j>0$. If $F \in D^b(X)$ and $M$ is the corresponding MCM module then
\[
\begin{split}
&\beta_{ij}(M)=\dim \Hom (R_{-j}, F[d-1+j-i]),\\
&h^{ij} (F)= \dim \Hom (\O(-j),F[i]).
\end{split}
\]
for $i,j \in \Z$. 
\end{Thm}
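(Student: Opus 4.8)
The plan is to deduce both formulas by combining the tautological formula of Theorem~\ref{main} with the description of the sheaves $R_m$ as the $\gamma$-orbit of $\O$ and with Serre duality on $X$. The second formula needs nothing new: by definition $h^{ij}(F)=\dim\mathbb{H}^i(X,F(j))$, the hypercohomology group $\mathbb{H}^i(X,F(j))$ is $\Hom_{D^b(X)}(\O_X,F(j)[i])$, and since tensoring with the line bundle $\O(-j)$ is an autoequivalence of $D^b(X)$ this group is isomorphic to $\Hom_{D^b(X)}(\O(-j),F[i])$. Taking dimensions gives the displayed identity.

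For the Betti numbers I would start from the tautological formula
$$
\beta_{ij}(M)=\dim\Hom\bigl(\Phi^{-1}(M),\sigma^{-j}(\Phi^{-1}(k^{\st}))[i]\bigr).
$$
Here $M=\Phi(F)$, so $\Phi^{-1}(M)=F$, and by the lemma above $\Phi^{-1}(k^{\st})\cong\O[1]$. The next step is to simplify $\sigma^{-j}(\O[1])$. Using the relation $\sigma=\gamma[1]$ together with the fact that the shift commutes with $\gamma$, one gets $\sigma^{-j}=\gamma^{-j}\circ[-j]$, hence $\sigma^{-j}(\O[1])=\gamma^{-j}(\O)[1-j]=R_{-j}[1-j]$, where $R_{-j}=\gamma^{-j}(\O)$ is defined for every $j\in\Z$ because $\gamma$ is an autoequivalence. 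Substituting, the tautological formula becomes $\beta_{ij}(M)=\dim\Hom(F,R_{-j}[i+1-j])$.

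It remains to rewrite this via Serre duality. Since $X$ is Calabi--Yau of dimension $d$ we have $\omega_X\cong\O_X$, so for all $E,G\in D^b(X)$ one has $\dim\Hom(E,G)=\dim\Hom(G,E[d])$. Applying this with $E=F$ and $G=R_{-j}[i+1-j]$ turns the previous line into $\beta_{ij}(M)=\dim\Hom(R_{-j},F[d-1+j-i])$, which is the first formula, valid for all $i,j\in\Z$: negative internal degree $j$ is handled by the definition of $R_{-j}$ through the autoequivalence $\gamma$, and negative homological degree $i$ by extending $\beta_{ij}$ through complete resolutions as in the remark after the tautological formula. I do not expect a genuine obstacle here; the only thing requiring care is the bookkeeping of homological shifts in passing from $\sigma$ to $\gamma$ to $R_m$, and the sign flip produced by the Serre-duality transpose, which together yield the exponent $d-1+j-i$.
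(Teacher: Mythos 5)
Your proposal is correct and follows essentially the same route as the paper: plug $\Phi^{-1}(k^{\st})\cong\O[1]$ into the tautological formula, use $\sigma=\gamma[1]$ to identify $\sigma^{-j}(\O[1])$ with $R_{-j}[1-j]$, and finish with Serre duality on the Calabi--Yau $X$ to obtain the shift $d-1+j-i$. The only difference is that you spell out the bookkeeping (and the extension to negative $i,j$) more explicitly than the paper does.
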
 
\begin{proof}
We know that $\beta_{ij}(M)= \dim \Hom (F, \sigma^{-j}(\O)[i+1])=\dim \Hom(F, R_{-j}[i-j+1])$. Applying Serre's duality we get 
$\beta_{ij}(M)=\dim \Hom (R_{-j}, F[d-1+j-i])$.

The second formula is clear.
\end{proof}

\end{document}